\theoremstyle{plain}
\newtheorem{thm}{Theorem}[section]
\newtheorem{lm}[thm]{Lemma}
\newtheorem{cor}[thm]{Corollary}
\newtheorem{prop}[thm]{Proposition}%[\theoreminside]
\newtheoremstyle{dotless}{}{}{\itshape}{}{\bfseries}{}{ }{}
\theoremstyle{dotless}
\newtheorem{thma}{Theorem}
\theoremstyle{definition}
\newtheorem{df}[thm]{Definition}
\newtheorem{rem}[thm]{Remark}
\numberwithin{equation}{section}
\newcommand\st{\,|\,}
\newcommand{\ns}[1]{\mathbb{#1}}
\newcommand{\Z}{\ns{Z}}
\newcommand{\Q}{\ns{Q}}
\newcommand{\R}{\ns{R}}
\newcommand{\C}{\ns{C}}
\newcommand{\GL}{\mathrm{GL}}
\newcommand{\I}{\mathrm{I}}
\newcommand{\Aut}{\mathrm{Aut}}
\newcommand{\Img}{\mathrm{Img}}
\newcommand{\diag}{\mathrm{diag}}
\begin{document}

\baselineskip=17pt

\author{R. Lutowski, A. Szczepa\'nski\\
Institute of Mathematics\\
University of Gda\'nsk\\
ul. Wita Stwosza 57\\
80-952 Gda\'nsk\\
Poland\\
E-mail: \texttt{rlutowsk@mat.ug.edu.pl}, \texttt{matas@univ.gda.pl}}

\title{Holonomy groups of flat manifolds with $R_\infty$ property}

\date{}

\maketitle

\renewcommand{\thefootnote}{}
\footnote{2010 \emph{Mathematics Subject Classification}: Primary 20H15, 55M20; Secondary 20C10, 20F34.}
\footnote{\emph{Key words and phrases}: Reidemeister number, flat manifolds, integral representations, Bieberbach groups.}
\renewcommand{\thefootnote}{\arabic{footnote}}
\setcounter{footnote}{0}

\begin{abstract}
Let $M$ be a flat manifold. We say that $M$ has $R_\infty$ property if
the~Reidemeister number $R(f) = \infty$ for every homeomorphism $f \colon M \to M.$  
In this paper,
we investigate a~relation between the holonomy representation $\rho$ of a~flat manifold $M$ and the $R_\infty$ property. 
In the case when the holonomy group of $M$ is solvable we show that if $\rho$ has a~unique 
$\mathbb{R}$-irreducible subrepresentation of odd degree then $M$ has $R_\infty$ property.
The result is related to conjecture 4.8 from \cite{DRP}.
\end{abstract}

\section{Introduction}
Let $f \colon M^n \to M^n$ be a continuous map on a closed $n$-dimensional manifold $M^n.$ From a point of view of the fixed
point theory the following three numbers have a particular meaning: the Lefschetz number $L(f)$, the Nielsen number $N(f)$ and the Reidemeister number $R(f)$. 
If $n\geq 3$, the Nielsen number
$N(f)$ is a sharp lower bound on the number of fixed points of any element in the homotopy class of $f$. However in general $N(f)$ is
difficult to calculate. In 1963, B. Jiang identified a large class of spaces for which
$$
N(f) = \left\{\begin{array}{ll}
0&\text{if } L(f) = 0,\\
R(f) &\text{if } L(f)\neq 0,
\end{array}\right.
$$
for all continuous maps $f\colon M^n\to M^n$.

In the light of the above relation,
since the Nielsen number is always finite, the finiteness of the Reidemeister number is important.
This was one of motivations to introduce
\begin{df}
A manifold $M$ has the \emph{$R_{\infty}$ property} if $R(f) =\infty$ for every homeomorphism
$f\colon M^n\to M^n$.
\end{df}

The Reidemeister number can be defined at the level of the fundamental group $\Gamma = \pi_1(M^n).$
Recall that any continuous map $f\colon M^n\to M^n$ induces a morphism $f_{\sharp}\colon\Gamma\to\Gamma$. We say that two elements
$\alpha,\beta\in\Gamma$ are $f_{\sharp}$-conjugated if there exists $\gamma\in\Gamma$ such that
$\beta = \gamma\alpha f_{\sharp}(\gamma)^{-1}$. The $f_{\sharp}$-conjugacy class
$\{\gamma\alpha f_{\sharp}(\gamma)^{-1}\mid \gamma\in\Gamma\}$ of $\alpha$ is called a Reidemeister
class of $f.$ The number of Reidemeister classes is called the Reidemeister number $R(f)$ of $f.$
%A manifold $M^n$ has the $R_{\infty}$ property if $R(f) = \infty$ for every homeomorphism $f\colon M^n\to M^n,$
%see \cite{DRP}.
It is evident that we can also define the above number $R(\Phi)$ for a countable discrete group
$E$ and its automorphism $\Phi.$
We say that a group $E$ has $R_{\infty}$
property if $R(\Phi) = \infty$ for any automorphism $\Phi.$ Moreover,
the family of groups with the $R_{\infty}$ property includes: non-elementary Gromov-hyperbolic groups,
Baumslag-Solitar groups $BS(m,n) = \langle a,b\mid ba^mb^{-1} = a^{n}\rangle$ except for $BS(1,1)$,
lamplighter groups $\Z_n \wr \Z$ if and only if $2|n$ or $3|n$, the Thompson group $F$ and symplectic groups $\mathrm{Sp}(2n,\Z), n \in \Z_+$.
See \cite{felsz} and \cite{TW} for the more comprehensive list and the history  of the  $R_{\infty}$-groups and the complete
bibliography.

% \vskip 1mm
\noindent
Let $M^n$ be a closed Riemannian manifold of dimension $n.$
We shall call $M^n$ flat if, at any point, the sectional curvature is equal to zero. Equivalently, $M^n$ is isometric
to the orbit space $\R^n/\Gamma,$ where $\Gamma$ is a discrete, torsion-free and co-compact subgroup of $O(n)\ltimes\R^n$ = Isom($\R^n$).
From the Bieberbach theorem (see \cite{charlap}, \cite{wolf}) $\Gamma$ defines a short exact sequence of groups
\begin{equation}\label{crysb}
0\rightarrow \Z^n\rightarrow\Gamma\stackrel{p}\rightarrow G\rightarrow 0,
\end{equation}
where $G$ is a finite group.
$\Gamma$ is called a Bieberbach group and $G$ its holonomy group.
%In this paper we shall consider the case of Bieberbach groups.
We can define a holonomy representation $\rho\colon G\to \GL(n,\Z)$ by the formula:
\begin{equation}\label{holonomyrep}
\forall g\in G,\rho(g)(e_i) = \tilde{g}e_i(\tilde{g})^{-1},
\end{equation}
where $e_i\in\Gamma$ are generators of the free abelian group $\Z^n$ for $i=1,2,...,n,$ and $\tilde{g}\in\Gamma$
such that $p(\tilde{g})=g.$

\vskip 1em
In this article we
describe relations between $R_{\infty}$ property of the flat manifold $M^n$ 
(Bieberbach group $\Gamma$) and a structure of its  holonomy representation.
The connections between geometric properties of $M^n$ and algebraic properties of $\rho$ were already considered
in different cases.
For example, Out($\Gamma$) is finite if and only if
the holonomy representation is $\Q$-multiplicity free and any $\Q$-irreducible component
of the holonomy representation is $\R$-irreducible, see \cite{S}. A similar equivalence says 
that an Anosov diffeomorphism $f\colon M^n\to M^n$ exists if and only if 
any $\Q$-irreducible component of a holonomy representation that occurs with multiplicity one is
reducible over $\R,$ see \cite{P}.  
We want to define conditions of this kind for the holonomy representation of 
a flat manifold with $R_{\infty}$
property. We already know that, in this way, the complete characterization is not possible.
There are examples \cite[Th.5.9]{DRP} of flat manifolds $M_1, M_2$ with the same 
holonomy representation such that $M_1$ has $R_{\infty}$ property and $M_2$ has not. 
In \cite[Corollary 4.4]{DRP} it is proved that
if there exists na Anosov diffeomorphism $f\colon M^n\to M^n$ then $R(f)$ is finite and $M^n$ does not have the $R_{\infty}$
property. 
Moreover there exists $M$ such that its holonomy representation has a $\Q$-irreducible component
which is irreducible over $\R$ and occurs with multiplicity one and $M$ does not have the $R_{\infty}$
property, \cite[Example 4.6]{DRP}.
Nevertheless in \cite[Th. 4.7]{DRP} the following is proved:

\begin{thm}[{\cite[Th. 4.7]{DRP}}]
\label{classical} 
Let $M$ be a flat manifold with a holonomy representation $\rho\colon G\to \GL(n,\Z)$ and
let $\rho'\colon G\to \GL(n',\Z)$ be a $\Q$-irreducible $\Q$-subrepresentation
of $\rho$ such that
$\rho'(G)$ is not $\Q$-conjugated to $\tilde{\rho}(G)$ for any other
$\Q$-subrepresentation $\tilde{\rho}$ of $\rho.$ Suppose moreover
that for every $D'\in N_{\GL(n',\Z)}(\rho'(G)),$ there exists $A\in G$ such that
$\rho'(A)D'$ has eigenvalue $1.$ Then $M$ has the $R_{\infty}$ property.
\end{thm}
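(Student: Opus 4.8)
The plan is to translate the topological statement into one about the induced automorphism $f_\sharp\colon\Gamma\to\Gamma$ and then to exhibit a single group element witnessing infinitely many Reidemeister classes. First I would use that the translation lattice $\Z^n$ is the maximal normal abelian subgroup of $\Gamma$ (Bieberbach), hence characteristic, so $f_\sharp(\Z^n)=\Z^n$. Writing $B:=f_\sharp|_{\Z^n}\in\GL(n,\Z)$ and letting $\bar f\in\Aut(G)$ be the automorphism induced on the quotient, the definition \eqref{holonomyrep} of $\rho$ gives, by functoriality, the intertwining relation $B\rho(g)B^{-1}=\rho(\bar f(g))$ for every $g\in G$. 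In particular $B\rho(G)B^{-1}=\rho(G)$, so $B$ normalizes $\rho(G)$ in $\GL(n,\Q)$.

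Next I would exploit the hypothesis to show that $B$ restricts to the subrepresentation $\rho'$. Let $V'\subseteq\Q^n$ be the $G$-invariant subspace carrying $\rho'$ and put $L':=\Z^n\cap V'$. Because $B$ intertwines $\rho$ with $\rho\circ\bar f$, the image $B(V')$ is again a $G$-invariant, $\Q$-irreducible subspace, and the group by which $G$ acts on it is $\Q$-conjugate (via $B$) to $\rho'(G)$. By the uniqueness hypothesis on $\rho'$ — that no \emph{other} $\Q$-subrepresentation of $\rho$ has image $\Q$-conjugate to $\rho'(G)$ — we must have $B(V')=V'$, whence $B(L')=L'$. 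Choosing a $\Z$-basis of $L'$ we obtain $B':=B|_{L'}\in\GL(n',\Z)$, and restricting the intertwining relation to the invariant subspace $V'$ gives $B'\rho'(g)(B')^{-1}=\rho'(\bar f(g))$, so $B'\in N_{\GL(n',\Z)}(\rho'(G))$. Applying the assumption to $D'=B'$ produces $A\in G$ for which $\rho'(A)B'$ has eigenvalue $1$; since $V'$ is invariant under both $\rho(A)$ and $B$, a block-triangular computation along the flag $0\subset V'\subset\Q^n$ shows that $\rho(A)B$ has eigenvalue $1$ as well, i.e. $\det(I-\rho(A)B)=0$.

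Finally I would convert this spectral condition into $R(f_\sharp)=\infty$. The projection $p\colon\Gamma\to G$ induces a surjection from the Reidemeister classes of $f_\sharp$ onto those of $\bar f$, so it suffices to show that the fibre over the $\bar f$-twisted class of $A$ is infinite. Fixing a lift $\tilde A$ of $A$, every element over $A$ has the form $a\tilde A$ with $a\in\Z^n$, and a direct computation using $f_\sharp(c)=Bc$ for $c\in\Z^n$ gives $c\,(a\tilde A)\,f_\sharp(c)^{-1}=\big(a+(I-\rho(A)B)c\big)\tilde A$. Hence, already modulo conjugation by $\Z^n$, the classes over $A$ are parametrized by the cokernel $\Z^n/(I-\rho(A)B)\Z^n$, which is infinite because $\det(I-\rho(A)B)=0$. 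The only further identifications come from conjugating by elements projecting into the twisted centralizer $Z=\{h\in G\st hA\bar f(h)^{-1}=A\}$, a subgroup of the finite group $G$; such a finite action cannot collapse an infinite set to finitely many orbits. Thus the fibre, and with it $R(f_\sharp)$, is infinite. As $f$ was an arbitrary homeomorphism, $M$ has the $R_\infty$ property.

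The main obstacle is the middle step: proving that the lattice automorphism $B$ genuinely preserves $L'$. This is exactly where the multiplicity/uniqueness hypothesis is indispensable — without it $B$ could swap $V'$ with a $\Q$-isomorphic copy and destroy the eigenvalue-$1$ conclusion. The passage from the spectral condition to $R(f_\sharp)=\infty$ is then routine, provided one verifies carefully that the finite twisted centralizer $Z$ cannot reduce the infinite cokernel to a finite set of orbits.
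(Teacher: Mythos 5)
Your proof is correct; note that the paper itself gives no proof of this statement (it is quoted from \cite{DRP}, Th.\ 4.7), and your argument reconstructs exactly the standard one: $\Z^n$ is characteristic in $\Gamma$, the non-conjugacy hypothesis forces the induced lattice automorphism $B$ to preserve the sublattice carrying $\rho'$, so $B$ restricts to an element of $N_{\GL(n',\Z)}(\rho'(G))$ and the hypothesis yields $\det(I-\rho(A)B)=0$, whence the twisted conjugacy classes lying over $A$ are parametrized by the infinite cokernel $\Z^n/(I-\rho(A)B)\Z^n$ modulo the action of the finite twisted centralizer. All the delicate points (the intertwining relation $B\rho(g)B^{-1}=\rho(\bar f(g))$, the block-triangular passage from $\rho'(A)B'$ to $\rho(A)B$, and the finiteness argument for the residual $Z$-action) are handled correctly.
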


\begin{rem}
\label{inclusion}
If we assume that 
\begin{equation}\label{simple}
N_{\GL(n',\Q)}(\rho'(G))/C_{\GL(n',\Q)}(\rho'(G)) \cong Aut(G),
\end{equation}
 then
the above requirement that $\rho'(G)$ is not $\Q$-conjugated to $\tilde{\rho}(G)$
is equivalent to the condition that $\rho'\subset\rho$ has multiplicity one.
For example, if we take the diagonal representation $\rho:(\Z_2)^{2n}\to \mathrm{SL}(2n+1,\Z)$
of the elementary abelian $2$-group, then the above equation (\ref{simple}) is not satisfied
for any $\Q$-irreducible subrepresentation of $\rho.$ 
\end{rem}

We shall prove:
\begin{thm} 
Let $M$ be a flat manifold with the holonomy representation $\rho\colon G\to\GL(n,\Z)$ and
let $G$ be a solvable group
and $\rho'\colon G\to \GL(n',\Z)$ be a $\Q$-irreducible $\Q$-subrepresentation
of $\rho$ of odd dimension. If $\rho'(G)$ is not $\Q$-conjugated to $\tilde{\rho}(G),$ for any other
$\Q$-subrepresentation $\tilde{\rho}$ of $\rho$  
then $M$ has the $R_{\infty}$ property.
\end{thm}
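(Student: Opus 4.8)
The plan is to deduce the statement from Theorem \ref{classical}. The hypothesis that $\rho'(G)$ is not $\Q$-conjugate to $\tilde\rho(G)$ for any other $\Q$-subrepresentation $\tilde\rho$ of $\rho$ is exactly the first hypothesis of that theorem, so it remains only to verify the eigenvalue condition: for every $D' \in N_{\GL(n',\Z)}(\rho'(G))$ there exists $A \in G$ such that $\rho'(A)D'$ has $1$ as an eigenvalue. Fix such a $D'$. First I would record the two elementary facts that drive everything. Since $\rho'(A), D' \in \GL(n',\Z)$, we have $\det(\rho'(A)D') = \det\rho'(A)\cdot\det D' \in \{\pm 1\}$; and, averaging the standard inner product over the finite group $\rho'(G)$, there is a positive definite $G$-invariant symmetric form $Q$, so that $\rho'(G) \subseteq \mathrm{O}(n',Q)$.

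In the absolutely irreducible case, where $\mathrm{End}_{\Q G}(\Q^{n'}) = \Q$, the space of $G$-invariant symmetric bilinear forms is one-dimensional, so $D'^{\top}QD' = cQ$ for a scalar $c>0$ (the form $D'^{\top}QD'$ is again $G$-invariant because $D'$ normalizes $\rho'(G)$). Comparing determinants gives $(\det D')^2\det Q = c^{n'}\det Q$, whence $c^{n'}=1$ and $c=1$; thus $D'$, and with it the whole coset $\rho'(G)D'$, lies in $\mathrm{O}(n',Q)$. I would then use the determinant to land inside $\mathrm{SO}(n',Q)$: if the sign character $\det\circ\rho'$ is nontrivial, choose $A$ with $\det\rho'(A)=\det D'$, so that $\det(\rho'(A)D')=(\det D')^2=1$, while if $\det\circ\rho'$ is trivial and $\det D'=1$ every coset element already lies in $\mathrm{SO}(n',Q)$. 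In either situation $\rho'(A)D' \in \mathrm{SO}(n',Q)$, and since $n'$ is odd such a matrix necessarily has $1$ as an eigenvalue: its non-real eigenvalues occur in complex-conjugate pairs contributing $+1$ to the determinant, so the real eigenvalues ($\pm 1$) have product $+1$, and as their number is odd at least one equals $+1$. This is the point at which the odd dimension is essential.

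Two difficulties remain, and this is where solvability enters. First, when $\det\circ\rho'$ is trivial and $\det D' = -1$ the coset lies in the non-identity component of $\mathrm{O}(n',Q)$, where eigenvalue $1$ is no longer automatic; I would rule this configuration out, or extract the eigenvalue directly, by analysing the sign character $\det\circ\rho'$ together with the structure of $N_{\GL(n',\Q)}(\rho'(G))/C_{\GL(n',\Q)}(\rho'(G)) \hookrightarrow \Aut(G)$ from Remark \ref{inclusion}. Second, and more seriously, when $\rho'$ is not absolutely irreducible the algebra $\mathcal{D} = \mathrm{End}_{\Q G}(\Q^{n'})$ is a division algebra of odd degree $d \mid n'$ (so $n'=md$ with $m,d$ odd), the invariant form is no longer unique, and $C_{\GL(n',\Z)}(\rho'(G))$ contains units of infinite order, so the forcing $c=1$ breaks down. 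Here I would peel off the solvable structure of $G$ by Clifford theory, choosing a normal subgroup of prime index and treating separately the case where $\rho'$ is induced from it, transferring the eigenvalue $1$ through the resulting block structure, and the case where $\rho'$ restricts isotypically, descending to a smaller group; the odd degree is preserved at each step and the absolutely irreducible argument serves as the base case. The main obstacle is precisely this reduction: controlling $\mathcal{D}$ and the infinite-order centralizer units well enough that the orthogonal, odd-dimensional eigenvalue argument survives the passage from $\Q$-irreducibility to absolute irreducibility, for which solvability, through Clifford theory and the resulting constraints on Schur indices, is exactly what is needed.
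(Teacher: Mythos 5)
Your reduction to Theorem \ref{classical} is the same first step the paper takes, and your orthogonal-group observation (averaging to get $\rho'(G)\subseteq \mathrm{O}(n',Q)$, forcing $D'{}^{\top}QD'=Q$ by the determinant, and extracting eigenvalue $1$ from $\mathrm{SO}(n',Q)$ in odd dimension) is sound as far as it goes. But the two ``difficulties'' you defer are not side cases --- they are the entire content of the paper's Theorem A, and your proposal does not close either of them. The case $\det\circ\rho'$ trivial with $\det D'=-1$ genuinely occurs and cannot be ``ruled out'': take $G=A_4$ acting absolutely irreducibly on $\Z^3$ inside $\mathrm{SO}(3)$ and $D'=-\mathrm{I}\in N_{\GL(3,\Z)}(\rho'(G))$; every element of the coset has determinant $-1$, and an odd-dimensional orthogonal matrix of determinant $-1$ (e.g.\ $-\mathrm{I}$ itself) need not have eigenvalue $1$. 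The statement still holds there, but only because a double transposition in the Klein subgroup acts as $\diag(1,-1,-1)$, so that $\rho'(g)D'$ picks up the eigenvalue $1$ --- i.e.\ the rescue comes from the normal abelian subgroup, which your argument never uses. That is precisely where the paper puts solvability: by Robinson's theorem $G$ has a nontrivial normal abelian subgroup $A$, which is shown to be elementary abelian $2$ (else the degree is even); diagonalizing $\rho|_A$ exhibits $N_{\GL(n,\Q)}(\rho(A))$ as a wreath product $\GL(n/k,\Q)\wr S$ containing $N_{\GL(n,\Z)}(\rho(G))$ (since $A$ is characteristic), transitivity of the block-permutation action of $G$ reduces $D=P_\sigma\diag(c_1,\dots,c_k)$ to a form with an odd-sized block $c_1$ having a real eigenvalue $\pm1$, and an element $a\in A$ with $\rho_1(a)=-\mathrm{I}$ flips the sign when needed. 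None of this block machinery appears in your proposal, so your ``analyse the sign character'' step is a gap, not a proof.

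The second difficulty is likewise unresolved. Your Clifford-theoretic induction over a normal subgroup of prime index is only a plan (you say yourself that controlling the division algebra and the infinite-order centralizer units is ``the main obstacle''), whereas the paper sidesteps the issue entirely: by Reiner's theorem, a faithful $\R$-irreducible integral representation of odd degree that is $\C$-reducible would have character $\chi+\overline{\chi}$, hence even degree, so the representation is automatically absolutely irreducible and the non-split case never arises. In short: your step one matches the paper, your orthogonal argument covers only the easy half of the eigenvalue condition, and the two places where you would need solvability are exactly the places your argument stops.
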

If we restrict our consideration to the class of finite groups
which satisfy the condition (\ref{simple}) we have

\begin{thm}
Let $M$ be a flat manifold with the holonomy representation $\rho\colon G\to\GL(n,\Z)$ and
let $G$ be a solvable group
and $\rho'\colon G\to \GL(n',\Z)$ be a $\Q$-irreducible $\Q$-subrepresentation of $\rho$
of mulitiplicity one and odd dimension
which satisfies the condition (\ref{simple})
then $M$ has the $R_{\infty}$ property.
\end{thm}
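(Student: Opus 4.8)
The plan is to derive this statement as an immediate consequence of the previous theorem, using Remark~\ref{inclusion} to convert the multiplicity-one hypothesis into the non-conjugacy condition demanded there. Concretely, I would show that, under condition~(\ref{simple}), the assumption that $\rho'\subset\rho$ occurs with multiplicity one implies that $\rho'(G)$ is not $\Q$-conjugated to $\tilde\rho(G)$ for any other $\Q$-subrepresentation $\tilde\rho$ of $\rho$. Once this implication is in hand, all hypotheses of the preceding theorem are met — solvability of $G$, $\Q$-irreducibility of $\rho'$, odd dimension, and non-conjugacy of images — and the conclusion that $M$ has the $R_\infty$ property follows at once.

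The whole content is therefore the equivalence recorded in Remark~\ref{inclusion}, of which only one direction is needed here. The easy direction, that non-conjugacy of images forces multiplicity one, holds without (\ref{simple}): equivalent irreducible $\Q$-subrepresentations $\rho'\cong\rho''$ satisfy $\rho''=A^{-1}\rho'A$ for some intertwiner $A\in\GL(n',\Q)$, so their images are conjugate, and if no \emph{other} subrepresentation has image conjugate to $\rho'(G)$ then no further copy of $\rho'$ can occur. The direction I actually require is the converse, and this is where (\ref{simple}) enters. Assume $\rho'$ has multiplicity one but, for contradiction, that $\tilde\rho(G)=D^{-1}\rho'(G)D$ for some further irreducible subrepresentation $\tilde\rho$ and some $D\in\GL(n',\Q)$. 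Then $g\mapsto D\tilde\rho(g)D^{-1}$ is a surjection of $G$ onto $\rho'(G)$; comparing it with $\rho'$ (which I may take to be faithful, replacing $G$ by $\rho'(G)$ otherwise) and using finiteness of $G$ yields an automorphism $\phi\in\Aut(G)$ with $\tilde\rho\cong D^{-1}(\rho'\circ\phi)D$, so that $\tilde\rho$ is equivalent to the twist $\rho'\circ\phi$.

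Condition~(\ref{simple}) is precisely what closes the argument: the isomorphism $N_{\GL(n',\Q)}(\rho'(G))/C_{\GL(n',\Q)}(\rho'(G))\cong\Aut(G)$ says that every automorphism of $G$, in particular $\phi$, is realised by conjugation by some $B\in N_{\GL(n',\Q)}(\rho'(G))$, whence $\rho'\circ\phi$ is $\Q$-conjugate, and so equivalent, to $\rho'$ itself. This gives $\tilde\rho\cong\rho'$, contradicting multiplicity one, and proves the required non-conjugacy. I expect the main obstacle to be exactly this converse step — ruling out that the twist $\rho'\circ\phi$ leaves the equivalence class of $\rho'$ — together with the bookkeeping when $\rho'$ is not faithful, where the normaliser quotient must be compared against $\Aut(\rho'(G))$ rather than $\Aut(G)$. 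With the equivalence secured, the theorem is immediate from the preceding result.
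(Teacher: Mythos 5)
Your proposal is correct and follows essentially the same route as the paper: the authors obtain this theorem from the preceding one by invoking the equivalence asserted (without proof) in Remark~\ref{inclusion} — multiplicity one plus condition~(\ref{simple}) versus non-conjugacy of images — together with the chain Robinson's theorem, Theorem~\ref{classical} and Theorem~A that is already packaged into the previous theorem. Your explicit verification of the needed direction of that equivalence, constructing the automorphism $\phi$ from the conjugating matrix and realising it inside the normaliser via~(\ref{simple}), is sound and simply supplies the details the paper leaves implicit.
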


\noindent
The above result is a corollary from \cite[Th. 5.4.4]{Rob}, the Theorem~\ref{classical} and the following
theorem:

\begin{thma}
Let $G$ be a finite group with a non-trivial normal abelian subgroup $A$ and let 
$\rho\colon G\to \GL(n,\Z)$ be a faithful $\R$-irreducible representation. 
Suppose $n$ is odd. Then for every $D\in N_{\GL(n,\Z)}(\rho(G)),$ 
there exists $g\in G$ such that $\rho(g) D$ has eigenvalue $1$.
\end{thma}
\addtocounter{thma}{-1}

The main idea used in the proof of the main result is
an application of the Clifford's theorem \cite[Theorem 49.2]{CuRe62},
which deals with a relation between irreducible $kG$-modules and $kH$-modules,
where $H$ is a normal subgroup of a finite group $G,$ and $k$ is an arbitrary field.

\begin{rem}
Conjecture 4.8 in \cite{DRP} says that the above Theorem \ref{thma:a} is
true for any finite group.
We do not know whether it holds in general.  
\end{rem}

% \vskip 5mm
\noindent
{\bf Acknowledgement.} 
We would like to thanks G. Hiss for a helpful conversation and particularly for calling our attention to the Clifford's theorem.

\section{Proof of Theorem \ref{thma:a}}

\begin{thm}
Let $G$ be a~finite group and $n$ be an odd integer. Let $\rho \colon G \to \GL(n,\Z)$ be a~faithful representation of $G$
which is irreducible over $\R$. Then $\rho$ is irreducible over $\C$.
\end{thm}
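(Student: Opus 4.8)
The plan is to use the theory of endomorphism algebras of real irreducible modules together with Frobenius's classification of the finite-dimensional real division algebras. I would regard $V = \R^n$ as a left $\R[G]$-module via $\rho$, and set $D = \mathrm{End}_{\R[G]}(V)$. Since $\rho$ is irreducible over $\R$, the module $V$ is simple, so by Schur's lemma $D$ is a division ring; being finite-dimensional over $\R$, Frobenius's theorem gives $D \cong \R$, $\C$, or $\mathbb{H}$, whence $\dim_\R D \in \{1,2,4\}$.

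The first key step is to observe that $V$ is naturally a vector space over the division algebra $D$, with $G$ acting $D$-linearly, so that $n = \dim_\R V = (\dim_\R D)(\dim_D V)$. In particular $\dim_\R D$ divides $n$. Since $n$ is odd and $\dim_\R D \in \{1,2,4\}$, this forces $\dim_\R D = 1$, that is, $D \cong \R$. Here the parity hypothesis on $n$ does all the work, ruling out the complex and quaternionic types.

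The second key step is to pass to the complexification $V_\C = V \otimes_\R \C$, viewed as a $\C[G]$-module affording $\rho$ over $\C$. Because $\C$ is flat over $\R$ and $V$ is finitely generated, base change commutes with $\mathrm{Hom}$, giving $\mathrm{End}_{\C[G]}(V_\C) \cong \mathrm{End}_{\R[G]}(V)\otimes_\R \C = D \otimes_\R \C$. With $D \cong \R$ from the previous step this yields $\mathrm{End}_{\C[G]}(V_\C) \cong \C$, a one-dimensional $\C$-algebra. Finally, since $G$ is finite we work in characteristic zero, so $V_\C$ is semisimple; a semisimple module whose endomorphism algebra is one-dimensional over $\C$ must be simple. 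Hence $V_\C$ is irreducible, i.e. $\rho$ is irreducible over $\C$.

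The main obstacle is conceptual rather than computational: one must correctly identify $D$ and justify the base-change identity $\mathrm{End}_{\R[G]}(V)\otimes_\R\C \cong \mathrm{End}_{\C[G]}(V_\C)$, since the argument reduces to tracking how $\dim_\R D$ interacts with the parity of $n$. (I note that faithfulness of $\rho$ is not actually needed for this statement.) An equivalent route avoiding division algebras is to use the real-valued character $\chi$ of $\rho_\C$: real irreducibility forces its complex constituents to fall into the real, complex, or quaternionic Frobenius--Schur types, and in the latter two the degree is even, so oddness of $n$ again selects the real type, in which $\rho_\C$ is irreducible.
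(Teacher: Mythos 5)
Your proof is correct, but it takes a different route from the paper's. The paper argues by contradiction via a citation to Reiner: if $\rho$ were reducible over $\C$ with $\C$-irreducible constituent $\tau$ of character $\chi$, then \cite[Theorem 2]{Re61} shows the character of $\rho$ is $\chi+\overline{\chi}$, so $\deg\rho$ is even --- a two-line argument that leans entirely on the reference. You instead compute the endomorphism algebra $D=\mathrm{End}_{\R[G]}(V)$, use Frobenius's classification to get $\dim_\R D\in\{1,2,4\}$, note that $\dim_\R D$ divides the odd number $n$ to force $D\cong\R$, and then conclude irreducibility of $V\otimes_\R\C$ from the base-change isomorphism $\mathrm{End}_{\C[G]}(V_\C)\cong D\otimes_\R\C\cong\C$ together with semisimplicity. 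All of these steps are sound (the divisibility $\dim_\R D\mid n$, the flat base change of $\mathrm{Hom}$ for finitely generated modules, and the fact that a semisimple module with one-dimensional endomorphism algebra over an algebraically closed field is simple), and your observation that faithfulness is irrelevant here is also correct --- the paper does not use it in this theorem either. What your approach buys is a self-contained argument that moreover identifies the Schur type of $\rho$; what the paper's buys is brevity. Your closing remark about Frobenius--Schur types (real, complex, quaternionic, with even degree in the latter two) is essentially the content of the Reiner result the paper invokes, so the two proofs are in the end two presentations of the same underlying trichotomy.
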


\begin{proof}
Assume that $\rho$ is reducible over $\C$ and let $\tau$ be any $\C$-irreducible subrepresentation of $\rho$. 
By \cite[Theorem 2]{Re61}, the representation $\rho$ is uniquely determined by $\tau$ and, if $\chi$ is the character of $\tau$, then the character of $\rho$ is given by
\[
\chi + \overline{\chi}.
\]
Hence $\rho$ is of even degree. This proves the theorem.
\end{proof}

For the rest of this section we assume that $\rho \colon G \to \GL(n,\Z)$ is an absolutely irreducible representation of $G$, where $n$ is an odd integer. 

\begin{prop}
If $A$ is a normal abelian subgroup of $G$, then $A$ is an elementary abelian $2$-group. 
\end{prop}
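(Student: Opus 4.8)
The plan is to prove this using Clifford's theorem, which the acknowledgements hint is the key tool. We have a finite group $G$ with a normal abelian subgroup $A$, and an absolutely irreducible representation $\rho\colon G\to\GL(n,\Z)$ with $n$ odd. First I would restrict $\rho$ to $A$. By Clifford's theorem, since $A$ is normal, $\rho|_A$ decomposes over $\C$ as a direct sum of $G$-conjugate irreducible representations of $A$, each appearing with the same multiplicity $e$; that is, $\rho|_A = e(\tau_1\oplus\cdots\oplus\tau_t)$, where the $\tau_i$ are the distinct $G$-conjugates of some irreducible $\C$-representation of $A$. Because $A$ is abelian, each $\tau_i$ is one-dimensional, so these are linear characters of $A$, and $n = e\cdot t$.

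Since $n$ is odd, both $e$ and $t$ must be odd. The strategy is then to analyze the linear characters $\tau_1,\dots,\tau_t$ of $A$ and show that if $A$ were not an elementary abelian $2$-group, we would force $\rho$ to fail to be realizable over $\Z$, or more directly over $\R$. The crucial point is that $\rho$ is a representation over $\Z$, hence in particular over $\R$, so its character is real-valued. The character of $\rho|_A$ is $e(\tau_1+\cdots+\tau_t)$, and reality forces the multiset $\{\tau_1,\dots,\tau_t\}$ to be closed under complex conjugation. Now complex conjugation pairs each character $\tau_i$ with $\overline{\tau_i}$; these come in genuine pairs unless $\tau_i = \overline{\tau_i}$, i.e. unless $\tau_i$ is real-valued, meaning $\tau_i$ takes values in $\{\pm1\}$. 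Since $t$ is odd, the pairing $\{\tau_i,\overline{\tau_i}\}$ cannot partition all $t$ characters into two-element blocks; at least one $\tau_i$ must be self-conjugate, hence real-valued with values in $\{\pm1\}$.

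Next I would upgrade this single real character to a statement about all of them using the transitive $G$-action. The $G$-conjugation action permutes $\{\tau_1,\dots,\tau_t\}$ transitively (this is part of Clifford's theorem). I would show that the property of being a $\{\pm1\}$-valued character is preserved or propagated under the $G$-action: conjugation by $g$ sends $\tau_i(a)$ to $\tau_i(g^{-1}ag)$, and since conjugation by $g$ is an automorphism of $A$, the image of a $\{\pm1\}$-valued character under this action is again $\{\pm1\}$-valued precisely when the relevant order-two structure is respected. The cleaner route is to observe that the set of $a\in A$ with $\tau_i(a)\neq\pm1$ for some (equivalently, by transitivity, all) $i$ determines a proper characteristic structure; combining transitivity with the existence of one self-conjugate real $\tau_i$ shows every $\tau_i$ is $\{\pm1\}$-valued. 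Since $\rho$ is faithful, $\bigcap_i \ker\tau_i$ restricted to $A$ is trivial (the $\tau_i$ separate points of $A$ because they appear in the faithful $\rho|_A$), so $A$ embeds into a product of $\{\pm1\}$'s, forcing $A$ to be an elementary abelian $2$-group.

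The main obstacle I anticipate is the propagation step: showing that \emph{every} $\tau_i$ is $\{\pm1\}$-valued, not merely one of them. The self-conjugacy argument from the odd parity of $t$ gives only a single real character for free, and one must leverage transitivity of the $G$-action carefully — the $G$-conjugate of a self-conjugate character need not obviously remain self-conjugate, since conjugation and complex-conjugation interact through the automorphism $A\to A$. I would resolve this by noting that if $\tau_i = \overline{\tau_i}$ and $\tau_j = \tau_i^g$, then $\overline{\tau_j} = \overline{\tau_i}^g = \tau_i^g = \tau_j$, so self-conjugacy \emph{is} $G$-invariant, and transitivity then yields that all $\tau_i$ are self-conjugate and hence $\{\pm1\}$-valued, completing the argument.
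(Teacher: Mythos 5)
Your proof is correct, but it takes a genuinely different route from the paper's. You work over $\C$: Clifford's theorem gives $\rho|_A = e(\tau_1\oplus\cdots\oplus\tau_t)$ with the $\tau_i$ distinct $G$-conjugate linear characters and $n=et$, so $t$ is odd; reality of the character of $\rho$ makes the set $\{\tau_1,\dots,\tau_t\}$ stable under complex conjugation, the odd cardinality forces a fixed point $\tau_i=\overline{\tau_i}$, your identity $\overline{\tau_i^{\,g}}=\overline{\tau_i}^{\,g}$ (which is correct, since conjugation by $g$ and complex conjugation visibly commute) propagates self-conjugacy to all constituents by transitivity, and faithfulness then embeds $A$ into $\{\pm1\}^t$. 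The paper instead argues by contradiction over $\R$: it takes $a\in A$ of order greater than $2$, uses faithfulness to find a real Clifford constituent on which $a$ acts by a matrix of order greater than $2$, observes that such a constituent must have degree $2$ (a degree-one real matrix of finite order is $\pm1$), and then invokes the fact that all Clifford constituents are conjugate, hence of equal degree, to conclude $n=2l$ is even. The two arguments are two faces of the same phenomenon --- a non-self-conjugate pair of linear characters is exactly a degree-two $\R$-irreducible constituent --- and both lean on the transitivity in Clifford's theorem at the decisive step, yours to spread self-conjugacy and the paper's to spread the degree. Your version is slightly longer but has the bonus of exhibiting the explicit embedding $A\hookrightarrow\{\pm1\}^t$, which is essentially the diagonal normal form \eqref{eq:decomposition2} that the paper has to set up separately right after the proposition; the paper's version is shorter because it only needs to track a single element of order greater than $2$ rather than the full character-theoretic bookkeeping.
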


\begin{proof}
Let $\tau$ be an $\R$-irreducible subrepresentation of $\rho_{|A}$. 
By Clifford's theorem \cite[Theorem 49.2]{CuRe62}, all $\R$-subrepresentations of $\rho_{|A}$ are conjugates of an
$\R$-irreducible subrepresentation $\tau$, i.e. there exist $g_1 = 1, g_2, \ldots, g_l \in G$ such that
\begin{equation}
\label{eq:decomposition}
\rho_{|A} = \tau^{(g_1)} \oplus \ldots \oplus \tau^{(g_l)}, 
\end{equation}
where
\[
\forall_{1 \leq i \leq l} \forall_{g \in G}\hskip 2mm \tau^{(g_i)}(g) = \tau(g_i^{-1} g g_i).
\]

Let $a \in A$ be an element of order greater than $2.$ Since $\rho$ is faithful, 
there exists $1 \leq i \leq l$ such that $\tau^{(g_i)}(a)$ is a~real 
matrix of order at least $3.$  
Hence $\deg(\tau^{(g_i)}) = \deg(\tau) = 2$ and $n = \deg(\rho) = \deg(\rho_{|A}) = l \deg(\tau) = 2l$ is an even integer. This contradiction finishes the proof.
\end{proof}

Since $A$ is an elementary abelian 2-group, the decomposition \eqref{eq:decomposition} may be realized over the rationals. By \cite[Theorem 49.7]{CuRe62} we may assume that 
\begin{equation}
\label{eq:decomposition2}
\rho_{|A} = e\tau^{(g_1)} \oplus \ldots \oplus e\tau^{(g_k)},
\end{equation}
i.e. one-dimensional representations $\tau^{(g_1)}, \ldots, \tau^{(g_k)}$ 
occur with the same multiplicity $e=n/k.$ Let $\rho_i := e\tau^{(g_i)}$, for $i=1,\ldots,k$. 
By a suitable choice of basis of $\Q^n$ we may assume that for every $a \in A, \rho(a)$ is a~diagonal matrix such that
\begin{equation}
\label{eq:images2}
\forall_{1 \leq i \leq k} \Img(\rho_k) = \langle -\I \rangle,
\end{equation}
where $\I$ is the identity matrix of degree $e$.

Since $A \lhd G$ and $\rho$ is faithful, we have
\[
\rho(A) \lhd \rho(G) \subset N_{\GL(n,\Q)}(\rho(A)) = \{m \in \GL(n,\Q) \st m^{-1} \rho(A) m = \rho(A) \}.
\]

In the next two subsections we will focus on the above normalizer.

\subsection{Centralizer}

In the beginning we describe the centralizer
\[
C_{\GL(n,\Q)}(\rho(A)) = \{ m \in \GL(n,\Q) \st \forall_{a \in A} m \rho(a) = \rho(a) m \}.
\]

Let $m=(m_{ij}) \in \GL(n,\Q)$ be a~block matrix such that $m \rho_{|A} = \rho_{|A} m$. 
We get
\[
\begin{pmatrix}
m_{11} & \ldots & m_{1k}\\
\vdots & \ddots & \vdots\\
m_{k1} & \ldots & m_{kk}
\end{pmatrix}
\begin{pmatrix}
\rho_1 & & 0\\
       & \ddots & \\
0      & & \rho_k
\end{pmatrix}
=
\begin{pmatrix}
\rho_1 & & 0\\
       & \ddots & \\
0      & & \rho_k
\end{pmatrix}
\begin{pmatrix}
m_{11} & \ldots & m_{1k}\\
\vdots & \ddots & \vdots\\
m_{k1} & \ldots & m_{kk}
\end{pmatrix},
\]
and thus
\[
\forall_{1 \leq i,j \leq k} m_{ij} \rho_j = \rho_i m_{ij}.
\]
Since for $i \neq j$, $\rho_i$ and $\rho_j$ have no common subrepresentation, 
by Schur's Lemma (see \cite[(27.3)]{CuRe62}) $m_{ij} = 0$ for $i \neq j$ and $m_{ii} \in \GL(n/k,\Q)$, for $i=1,\ldots,k$. 
We have just proved

\begin{lm}
\label{lm:centralizer}
Let $\rho \colon G \to \GL(n,\Q)$ be a~faithful, absolutely irreducible representation of a finite group $G$ of odd degree $n$. Let $A$ be a~normal abelian subgroup of $G$ such that conditions \eqref{eq:decomposition2} and \eqref{eq:images2} hold. Then
\[
C_{\GL(n,\Q)}(\rho(A)) = \{ \diag(c_1,\ldots,c_k) \st c_i \in \GL(n/k,\Q), i=1,\ldots,k \},
\]
where $k$ is equal to the number of pairwise non-isomorphic irreducible subrepresentations of $\rho_{|A}$.
\end{lm}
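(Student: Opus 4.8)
The plan is to exploit the explicit block decomposition \eqref{eq:decomposition2} of $\rho_{|A}$ into the $k$ isotypic components $\rho_i = e\tau^{(g_i)}$, each of degree $n/k$. First I would write an arbitrary element $m \in C_{\GL(n,\Q)}(\rho(A))$ as a $k \times k$ block matrix $m = (m_{ij})$, with blocks of size $(n/k) \times (n/k)$ matched to this decomposition. The defining condition $m\rho(a) = \rho(a)m$ for all $a \in A$ then translates, block by block, into the intertwining relations $m_{ij}\rho_j(a) = \rho_i(a)m_{ij}$; equivalently, each $m_{ij}$ is a homomorphism of $A$-modules from the $\rho_j$-component to the $\rho_i$-component.

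Next I would show that the off-diagonal blocks vanish. For $i \neq j$ the modules $\rho_i = e\tau^{(g_i)}$ and $\rho_j = e\tau^{(g_j)}$ are isotypic of distinct irreducible types, since by construction the one-dimensional characters $\tau^{(g_i)}$ are pairwise non-isomorphic; hence they share no common irreducible constituent, so $\mathrm{Hom}_A(\rho_j,\rho_i) = 0$ by Schur's lemma and $m_{ij} = 0$. The point to handle with care is that Schur's lemma must be applied to the whole isotypic components rather than to the irreducible constituents in isolation: what forces the off-diagonal intertwiners to be zero is precisely the distinctness of the characters $\tau^{(g_i)}$, which is exactly what the grouping recorded in \eqref{eq:decomposition2} encodes. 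I expect this to be the main (though mild) obstacle, as it is where the structure of the decomposition is genuinely used.

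For the diagonal blocks, $m_{ii}$ is an $A$-endomorphism of the isotypic component $e\tau^{(g_i)}$. Since each $\tau^{(g_i)}$ is one-dimensional, it is absolutely irreducible with endomorphism algebra $\Q$, so $\mathrm{End}_A(e\tau^{(g_i)}) \cong M_{n/k}(\Q)$, whose invertible elements form $\GL(n/k,\Q)$; this gives $m_{ii} \in \GL(n/k,\Q)$ and hence the inclusion of the centralizer into the set of block-diagonal matrices $\diag(c_1,\ldots,c_k)$ with $c_i \in \GL(n/k,\Q)$. The reverse inclusion is immediate from \eqref{eq:images2}, where each $\rho(a)$ restricts to the scalar $\pm\I$ on every block, so every block-diagonal matrix automatically commutes with $\rho(A)$. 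Combining the two inclusions yields the claimed description of $C_{\GL(n,\Q)}(\rho(A))$.
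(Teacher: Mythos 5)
Your proposal is correct and follows essentially the same route as the paper: block decomposition against \eqref{eq:decomposition2}, the intertwining relations $m_{ij}\rho_j = \rho_i m_{ij}$, and Schur's lemma applied to the distinct isotypic components to kill the off-diagonal blocks. The only difference is that you spell out the reverse inclusion and the identification $\mathrm{End}_A(e\tau^{(g_i)}) \cong M_{n/k}(\Q)$ explicitly, which the paper leaves implicit.
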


\subsection{Normalizer}

\label{subsec:normalizer}

Since the group $A$ is finite, $\Aut(A)$ is a~finite group. Moreover, we have a~monomorphism
\[
N_{\GL(n,\Q)}(\rho(A))/C_{\GL(n,\Q)}(\rho(A)) \hookrightarrow \Aut(A).
\]
 
Hence any coset $mC_{\GL(n,\Q)}(\rho(A)), m\in N_{\GL(n,\Q)}(\rho(A))$
corresponds to some automorphism of $A.$

Let $\varphi \in \Aut(A)$ and $m = (m_{ij}) \in \GL(n,\Q)$ be a~block matrix, 
which represents this automorphism,
with blocks of degree $n/k,$ i.e. 
\[
\forall_{c\in C_{\GL(n,\Q)}(\rho(A))}\forall_{a \in A} (mc) \rho(a) (mc)^{-1} = m \rho(a) m^{-1} = \rho(\varphi(a)).
\] 
We have
\[
\begin{pmatrix}
m_{11} & \ldots & m_{1k}\\
\vdots & \ddots & \vdots\\
m_{k1} & \ldots & m_{kk}
\end{pmatrix}
\begin{pmatrix}
\rho_1 & & 0\\
       & \ddots & \\
0      & & \rho_k
\end{pmatrix}
=
\begin{pmatrix}
\rho_1\varphi & & 0\\
       & \ddots & \\
0      & & \rho_k\varphi
\end{pmatrix}
\begin{pmatrix}
m_{11} & \ldots & m_{1k}\\
\vdots & \ddots & \vdots\\
m_{k1} & \ldots & m_{kk}
\end{pmatrix}.
\]
Note that
\begin{equation}
\label{eq:images}
\forall_{1 \leq i \leq k} \Img(\rho_i) = \Img(\rho_i \varphi) = \langle -\I \rangle.
\end{equation}
Since, for $i \neq j$, $\rho_i$ and $\rho_j$ do not have common subrepresentations, the same applies to $\rho_i\varphi$ and $\rho_j\varphi$.
Hence, using Schur's lemma again for every $1 \leq i \leq k$ there exists exactly one $1 \leq j \leq k$ such that
\[
m_{ji} \rho_i = \rho_j \varphi m_{ji}
\]
and $m_{ji} \neq 0$. Moreover, $\det(m) \neq 0$ and also $\det(m_{ij})\neq 0$. 
By \eqref{eq:images} $\rho_i = \rho_j \varphi$ and there exists a~permutation $\sigma \in S_k$, where $S_k$ is the symmetric group on $k$ letters, such that
\begin{equation}
\label{eq:normalizer}
m \diag(\rho_1,\ldots,\rho_k) m^{-1} = \diag(\rho_{\sigma(1)}, \ldots, \rho_{\sigma(k)}).
\end{equation}

Let $\tau \in S_k$ be any permutation and let $P_\tau \in \GL(n,\Q)$ be  the block matrix, with blocks of degree $n/k$, such that
\begin{equation}
\label{eq:ptau}
(P_\tau)_{i,j} = 
\left\{
\begin{array}{ll}
 \I & \text{if } \tau(i) = j,\\
  0 & \text{otherwise},
\end{array}
\right.
\end{equation}
where $1 \leq i,j \leq k$. By \eqref{eq:normalizer} we may take
\[
m = P_\sigma
\]
as a~representative of a~coset in $N_{\GL(n,\Q)}(\rho(A))/C_{\GL(n,\Q)}(\rho(A))$, 
which realizes the automorphism $\varphi$.

Let 
\[
S := \{ \tau \in S_k \st P_\tau \in N_{\GL(n,\Q)}(\rho(A)) \}.
\]
Then $S$ is a~subgroup of $S_k$ and
\[
P := \{ P_\tau \st \tau \in S \}
\]
is a~subgroup of the normalizer. By the above and the Lemma \ref{lm:centralizer}, we get

\begin{prop}
\label{prop:normalizer}
The normalizer $N_{\GL(n,\Q)}(\rho(A))$ is a~semidirect product of  $C_{\GL(n,\Q)}(\rho(A))$ and $P$. Moreover
\[
N_{\GL(n,\Q)}(\rho(A)) = C_{\GL(n,\Q)}(\rho(A)) \cdot P \cong C_{\GL(n,\Q)}(\rho(A)) \rtimes S \cong \GL(n/k,\Q) \wr S,
\]
where $\GL(n/k,\Q) \wr S$ denotes the wreath product of $\GL(n/k,\Q)$ and $S$.
\end{prop}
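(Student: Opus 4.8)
The plan is to prove Proposition~\ref{prop:normalizer} by combining the structural information already extracted in this subsection: the explicit description of the centralizer (Lemma~\ref{lm:centralizer}), the fact that every element of the normalizer permutes the isotypic blocks via some $\sigma \in S_k$ (equation~\eqref{eq:normalizer}), and the observation that each such permutation is realized by the corresponding permutation matrix $P_\sigma$. First I would argue that $N := N_{\GL(n,\Q)}(\rho(A))$ is generated by $C := C_{\GL(n,\Q)}(\rho(A))$ together with $P$. Indeed, given $m \in N$, equation~\eqref{eq:normalizer} produces a $\sigma \in S$ with $P_\sigma \in N$ and $m \diag(\rho_1,\ldots,\rho_k) m^{-1} = P_\sigma \diag(\rho_1,\ldots,\rho_k) P_\sigma^{-1}$. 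Hence $P_\sigma^{-1} m$ commutes with $\rho(a)$ for all $a \in A$, so $P_\sigma^{-1} m \in C$ and $m \in P_\sigma C \subset P \cdot C$. This shows $N = C \cdot P$.

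Next I would verify that this decomposition is that of an internal semidirect product, which requires three things: that $C$ is normal in $N$, that $P$ is a subgroup, and that $C \cap P$ is trivial. Normality of $C$ in $N$ is automatic since the centralizer is always normal in the normalizer. That $P$ is a subgroup follows from $S$ being a subgroup of $S_k$ (already noted before the proposition) together with the multiplicativity $P_\tau P_{\tau'} = P_{\tau \tau'}$ of the block permutation matrices defined in~\eqref{eq:ptau}. For the trivial intersection, I would observe that a nonzero $P_\tau$ lies in $C = \{\diag(c_1,\ldots,c_k)\}$ precisely when $\tau$ fixes every index, i.e. $\tau = \mathrm{id}$, so $C \cap P = \{\I\}$. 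Together these give $N = C \rtimes P \cong C \rtimes S$.

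Finally I would identify this internal semidirect product with the wreath product $\GL(n/k,\Q) \wr S$. By Lemma~\ref{lm:centralizer}, $C \cong \GL(n/k,\Q)^k$ as the group of block-diagonal matrices, and conjugation of $\diag(c_1,\ldots,c_k)$ by $P_\tau$ permutes the diagonal entries according to $\tau$; this is exactly the defining action of $S$ on the base group $\GL(n/k,\Q)^k$ in the (restricted/permutation) wreath product. Checking that this action matches the abstract wreath-product action is the one place a small computation is needed.

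The main obstacle, such as it is, is not deep but bookkeeping: one must confirm that the permutation action of $P_\tau$ by conjugation on the diagonal blocks agrees with the standard coordinate-permutation action defining the wreath product, and in particular that the indices are permuted by $\tau$ rather than $\tau^{-1}$ (a convention that depends on the precise definition~\eqref{eq:ptau} of $P_\tau$). Once the conventions are pinned down, the isomorphism $C \rtimes S \cong \GL(n/k,\Q) \wr S$ is immediate from the universal description of the wreath product, and the proposition follows.
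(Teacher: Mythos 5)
Your proposal is correct and takes essentially the same route as the paper: the paper's proof of Proposition \ref{prop:normalizer} consists precisely of the preceding computation in the subsection (Schur's lemma forcing each normalizer element into the form $P_\sigma\cdot\diag(c_1,\ldots,c_k)$, the definition of $S$ and $P$, and Lemma \ref{lm:centralizer}), which you reconstruct and in fact spell out more carefully by explicitly checking normality of the centralizer, triviality of $C\cap P$, and the match with the wreath-product action.
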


\subsection{Properties of the group \texorpdfstring{$G$}{G}}

Let
\[
C:= C_{G}(A)%\rho^{-1}\bigl(\rho(G) \cap C_{\GL(n,\Q)}(\rho(A))\bigr)
\]
be the centralizer of $A$ in $G$. Since $\rho$ is faithful, we have that 
\[
C = \rho^{-1}(C_{\GL(n,\Q)}(\rho(A))).
\]

By the proposition \ref{prop:normalizer}, the kernel of the following composition
\[
G \stackrel{\rho}{\longrightarrow} N_{\GL(n,\Q)}(\rho(A)) \stackrel{\nu}{\longrightarrow} N_{\GL(n,\Q)}(\rho(A))/C_{\GL(n,\Q)}(\rho(A)) \cong S,
\]
where $\nu$ is the quotient homomorphism, equals $C$ and hence we have an isomorphism of groups
\[
S \cong G/C.
\]

The representations $\rho_i, i=1,\ldots,k$ are defined on the group $A$.  
Lemma \ref{lm:centralizer} gives us a~possibility to extend the domains of 
these representations to $C$. 
Let $V_i$ be subspaces of $\Q^n$ corresponding to 
representations $\rho_i$, $i=1,\ldots,k$. In fact, since $\rho_{|C}$ is in 
block diagonal form, we have 

\[
\forall_{1 \leq i \leq k} \; V_i = \underbrace{\Theta \oplus \ldots \oplus \Theta}_{i-1} \oplus \Q^{n/k} \oplus \Theta \oplus \ldots \oplus \Theta \subset \Q^n,
\]
where $\Theta$ is considered as a~zero-dimensional subspace (zero vector) of $\Q^{n/k}.$ 
Moreover, every element of the group $S$ permutes elements of the set 
\[
\{V_1, \ldots, V_k\}.
\]
We want to prove that this action is transitive. 
\begin{lm}
\label{lm:transitive}
$S \subset S_k$ is a~transitive permutation group.
\end{lm}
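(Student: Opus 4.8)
The plan is to show transitivity by contradiction, exploiting the absolute irreducibility of $\rho$ together with the fact that $Q$ acts on $\{V_1,\ldots,V_k\}$ by permutation. Suppose $Q \subset S_k$ is not transitive. Then the set of indices $\{1,\ldots,k\}$ splits into at least two $Q$-orbits; let $O \subsetneq \{1,\ldots,k\}$ be one such orbit and set $W := \bigoplus_{i \in O} V_i \subset \Q^n$. I would argue that $W$ is a proper nonzero $\rho(G)$-invariant subspace, which contradicts the $\Q$-irreducibility of $\rho$ (and hence its $\R$- and absolute irreducibility assumed throughout this section).

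The key steps are as follows. First I would verify that $W$ is invariant under $\rho(C)$: by Lemma~\ref{lm:centralizer} every element of $\rho(C)$ is block-diagonal with respect to the decomposition $\Q^n = V_1 \oplus \ldots \oplus V_k$, so it preserves each summand $V_i$ individually and in particular preserves the partial sum $W$. Second, I would check that $W$ is invariant under $\rho(Q) = P \cap \rho(G)$: each $P_\tau$ with $\tau \in Q$ sends $V_i$ to $V_{\tau(i)}$ (this is exactly the content of the definition \eqref{eq:ptau} of $P_\tau$), and since $O$ is a $Q$-orbit the set $\{V_i \st i \in O\}$ is permuted among itself, so $W$ is stable. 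Combining these via the factorization $G = C \cdot Q$ from \eqref{eq:semidirect} shows $W$ is $\rho(G)$-invariant.

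Finally, because $O$ is a proper nonempty subset of $\{1,\ldots,k\}$ and each $V_i$ is nonzero (of dimension $n/k \geq 1$), the subspace $W$ is neither $0$ nor all of $\Q^n$. This gives a proper nontrivial invariant subspace, contradicting the standing hypothesis that $\rho$ is absolutely irreducible; therefore no such decomposition into several orbits exists and $Q$ must act transitively on $\{V_1,\ldots,V_k\}$, i.e. transitively on $\{1,\ldots,k\}$.

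The main obstacle, modest as it is here, lies in being careful about the two directions of invariance and in making precise the identification of $Q$ with a subgroup of $S_k$ acting through the permutation matrices $P_\tau$. The invariance under $\rho(C)$ is immediate from block-diagonality, and the invariance under $\rho(Q)$ follows directly from the permutation action encoded in \eqref{eq:ptau}; the only point requiring attention is that $G$ is generated (as a product) by $C$ and $Q$, so that invariance under both factors genuinely yields invariance under all of $\rho(G)$. Once this is in place, the contradiction with irreducibility is automatic.
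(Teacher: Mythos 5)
Your proof is correct and follows essentially the same route as the paper: assume non-transitivity, form a proper nonzero $\rho(G)$-invariant subspace as the direct sum of the $V_i$ over a $Q$-stable proper subset of indices, and contradict the irreducibility of $\rho$ using the factorization $G = C \cdot Q$ together with the block-diagonality of $\rho(C)$ and the permutation action of $P$. If anything, your choice of summing over an arbitrary proper $Q$-orbit is slightly cleaner than the paper's phrasing, which states the negation of transitivity as though some index had a singleton orbit before taking the complementary sum $\hat V_j$.
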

\begin{proof}
If we assume that $S$ is not transitive, then
\[
\exists_{1 \leq j \leq k} \forall_{i \neq j} \forall_{\tau \in S} \; \tau(i) \neq j.
\]
Let 
\[
\hat V_j = \mathop{\bigoplus_{i=1}^k}_{i \neq j} V_i
\]
and $g \in G$. Then $\rho(g)=P_\tau m$, for some $\tau \in S$ and $m \in \bigoplus_{i=1}^k\GL(n/k,\Q)$. We get
\[
\rho(g)(\hat V_j) = P_\tau m \cdot \hat V_j = P_\tau \cdot \hat V_j = \mathop{\bigoplus_{i=1}^k}_{i \neq j} V_{\tau(i)} = \hat V_j.
\]
Thus $\hat V_j \subsetneq \Q^n$ is an invariant subspace of $\rho$ and hence $\rho$ is reducible (over $\Q$). 
This contradiction proves the lemma.
\end{proof}

The following lemma helps us to understand the structure of the representation $\rho$.

\begin{lm}
Representations $\rho_1,\ldots,\rho_k \colon C \to \GL(n/k,\Q)$ are absolutely irreducible.
\end{lm}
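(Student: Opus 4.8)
The plan is to reduce the statement to a single index and then to a question about real irreducibility, where the odd degree can be exploited. Since $Q \subseteq S_k$ acts transitively on $\{1,\ldots,k\}$ (Lemma~\ref{lm:transitive}) and conjugation by a preimage in $G$ of a $\tau \in Q$ carries $\rho_i$ to $\rho_i$ twisted by an automorphism of $C$ and relocated to the block $\tau(i)$, the representations $\rho_1,\ldots,\rho_k$ are pairwise equivalent up to such twisting; as absolute irreducibility is invariant under composition with an automorphism, it suffices to prove that $\rho_1$ is absolutely irreducible. I will use throughout that $C = C_G(A)$ is normal in $G$, that each $\rho_i$ takes values in $\GL(n/k,\Q)$ (being a diagonal block of the rational matrix $\rho(c)$, by Lemma~\ref{lm:centralizer}), and that $\mathrm{Hom}_{\Q C}(V_i,V_j) = 0$ for $i \neq j$, since any such homomorphism would have to intertwine the distinct scalar characters by which $A$ acts on $V_i$ and $V_j$.

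Because $\rho_1$ has odd degree $n/k$ and, being a rational representation of a finite group, may be realized integrally, the first theorem of this section applies to the faithful representation of $C/\ker\rho_1$ that $\rho_1$ induces: a representation of odd degree that is irreducible over $\R$ is automatically irreducible over $\C$. Hence it is enough to show that $\rho_1$ is irreducible over $\R$, that is, that $V_1$ has no proper nonzero $C$-invariant subspace. In this way the whole weight of the argument falls on establishing this single irreducibility.

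For irreducibility I would argue through Clifford's theorem (\cite[Theorem 49.2]{CuRe62}) applied to the normal subgroup $C \lhd G$. Writing $Q_1 := \{\tau \in Q \st \tau(1) = 1\}$ for the stabilizer of the first block, the vanishing of the off-diagonal $\mathrm{Hom}$-spaces gives $\mathrm{End}_{\C C}(\C^n) = \bigoplus_{i=1}^k \mathrm{End}_{\C C}(V_i)$, on which $G$ acts through $Q$, permuting the summands transitively; taking $G$-invariants and using that $\rho$ is absolutely irreducible, so that $\mathrm{End}_{\C G}(\C^n) = \C$, yields $\mathrm{End}_{\C C}(V_1)^{Q_1} = \C$. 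The main obstacle is to upgrade this to $\mathrm{End}_{\C C}(V_1) = \C$, equivalently to exclude that $V_1$ decomposes over $C$ into several constituents permuted transitively by $Q_1$ — a configuration that the endomorphism computation alone does \emph{not} rule out. This is precisely the step where both hypotheses must be spent at once: since $\rho_1$ is rational, the set of irreducible constituents of $V_1|_C$ is stable under the Galois action as well as under $Q_1$, and one must show that the interaction of these two actions with the fact that $\dim V_1 = n/k$ is odd forces a single constituent occurring with multiplicity one. Making this parity-and-Galois (equivalently, Schur-index) argument precise, and thereby eliminating the potential $Q_1$-orbits of constituents, is the crux of the proof.
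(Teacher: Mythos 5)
You leave the decisive step unproved, so this is a genuine gap: after correctly computing $\mathrm{End}_{\C C}(V_1)^{Q_1}=\C$, you acknowledge that you still must exclude the possibility that $V_1|_C$ splits into several constituents permuted transitively by $Q_1$, and you only sketch a hoped-for ``parity-and-Galois'' argument without carrying it out. In fact no such machinery is needed, and the missing step is immediate from the normalization already fixed earlier in the paper: by construction $Q=\rho^{-1}(\rho(G)\cap P)$, so every $\tau\in Q$ acts by a block permutation matrix $P_\tau$ whose nonzero blocks are \emph{identity} matrices. Hence any $\tau\in Q_1$ restricts to the identity on $V_1$, the stabilizer $CQ_1$ of the first block acts trivially on $\mathrm{End}_{\C C}(V_1)$, and your own invariants computation already yields $\mathrm{End}_{\C C}(V_1)=\mathrm{End}_{\C C}(V_1)^{Q_1}=\C$, i.e.\ absolute irreducibility of $\rho_1$. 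The configuration you worry about --- constituents of $V_1$ permuted nontrivially by $Q_1$ --- cannot occur, since $Q_1$ fixes every subspace of $V_1$. (Your preliminary reductions are fine but partly superfluous: once the endomorphism algebra is shown to be $\C$ you do not need the detour through $\R$-irreducibility and the odd-degree theorem, and indeed the oddness of $n$ plays no role in this lemma.)

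For comparison, the paper argues in the same spirit but with subspaces rather than endomorphism algebras: Clifford's theorem applied to $C\lhd G$ writes $\rho|_C$ as a sum of $m$ conjugates of a single $\C$-irreducible $\phi$, with $m/k$ of them inside each $V_i$; choosing aligned bases (again using that $\rho(Q)=P$ has identity blocks) and transporting one fixed constituent $V_{1,1}$ to each block, the subspace $W=\bigoplus_{i=1}^k V_{i,1}$ is $G$-invariant and proper whenever $m>k$, contradicting the absolute irreducibility of $\rho$. Either route works, and both hinge on exactly the fact your write-up overlooks; as submitted, your proof is not complete.
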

\begin{proof}
Let $\phi \colon C \to \GL(d,\C)$ be a~$\C$-irreducible subrepresentation of $\rho_{|C}$. 
By Clifford's theorem, for the group $C \lhd G$ the representation $\rho_{|C}$ is a~sum of conjugates of $\phi,$ i.e.
\[
\rho_{|C} = \bigoplus_{s=1}^m \phi^{(g_s)}, 
\]
where $g_s \in G, s=1,\ldots,m$ and $g_1 = 1$. For every $1 \leq s \leq m$, $\phi^{(g_s)}$ is a~complex subrepresentation of some $\rho_i, i=1,\ldots,k$. 
Counting dimensions, we can see that for every $1 \leq i \leq k$ 
\[
\rho_i = \bigoplus_{j=1}^{m/k} \rho_{i,j},
\]
where
\[
\forall_{1 \leq j \leq m/k} \; \rho_{i,j} \in \{\phi^{(g_s)} \st 1 \leq s \leq m\}.
\]

Let $V_{i,j} \subset V_i$ be an invariant space under the action of $\rho_{i,j}$, for $1 \leq i \leq k, 1 \leq j \leq m/k$. Taking a~suitable basis for $V_i$, $1 \leq i \leq k$, we can assume that the decomposition
\[
\rho_i = \bigoplus_{j=1}^{m/k} \rho_{i,j}
\]
is given in a~block diagonal form: 
\[
\forall_{1 \leq j \leq m/k} \; V_{i,j} = \underbrace{\Theta \oplus \ldots \oplus \Theta}_{j-1} \oplus \C^{n/m} \oplus \Theta \oplus \ldots \oplus \Theta \subset V_i,
\]
where $\Theta$ is a~zero-dimensional subspace (zero vector) of $\C^{n/m}$. 
Note that the images of $\rho_i{}_{|A}, i=1\dots,k,$ remain the same in this new basis.
Hence the description of the representatives of the normalizer given in the subsection \ref{subsec:normalizer}, 
remains the same for the group $\GL(n,\C)$.

If the representations $\rho_i, i=1,\ldots,k,$ are $\C$-reducible then $m > k$. Let
\[
W = \bigoplus_{i=1}^k V_{i,1}
\]
and $g \in G$. Then $\rho(g) = P_\tau m$ (as in the proof of lemma \ref{lm:transitive}) and we get
\[
\rho(g)(W) = P_\tau m \cdot W = P_\tau \cdot W = \bigoplus_{i=1}^k V_{\tau(i),1} = W.
\]
Hence $W \subsetneq \C^n$ is an invariant subspace of $\rho$ and thus $\rho$ cannot be absolutely irreducible. This contradiction finishes the proof.
\end{proof}

\subsection{Abelian normal subgroups}

Without lost of generality we can assume that $A$ is a maximal abelian normal subgroup of $G$, i.e. if $A' \lhd G$ is abelian and $A \subset A'$ then $A = A'$. We will show, that $A$ is unique in $G$ and hence -- characteristic.

\begin{lm}
$A$ is unique in $C$.
\end{lm}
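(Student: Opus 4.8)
The plan is to prove the lemma by identifying $A$ explicitly, namely to show that $A = Z(C)$. Since the center is a canonically determined subgroup of $C$, this pins $A$ down completely and so establishes that $A$ is unique in $C$ (and, as a bonus, characteristic in $C$, which is the form in which the result will be reused when passing from $C$ to $G$). The first step I would record is that, by construction, $C = \rho^{-1}\bigl(\rho(G)\cap C_{\GL(n,\Q)}(\rho(A))\bigr)$ is nothing but the centralizer $C_G(A)$: since $\rho$ is faithful, $g$ lies in $C$ exactly when $\rho(g)$ commutes with $\rho(A)$, i.e. exactly when $g$ commutes with every element of $A$. As $A$ is abelian we have $A\subseteq C_G(A)=C$, and since every element of $C$ commutes with every element of $A$ we get at once $A\subseteq Z(C)$. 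The same inclusion can be seen blockwise from \eqref{eq:images}: each $a\in A$ satisfies $\rho_i(a)\in\langle -\I\rangle$, so $\rho(a)=\diag(\pm\I,\ldots,\pm\I)$ is scalar on every block and hence commutes with every $\rho(c)=\diag(\rho_1(c),\ldots,\rho_k(c))$, $c\in C$; faithfulness then gives $a\in Z(C)$.

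For the reverse inclusion I would invoke the maximality of $A$. The center $Z(C)$ is characteristic in $C$, and $C$ is normal in $G$ by the semidirect decomposition \eqref{eq:semidirect}; hence $Z(C)\lhd G$. Moreover $Z(C)$ is abelian and contains $A$. Since $A$ was chosen maximal among abelian normal subgroups of $G$, the inclusion $A\subseteq Z(C)$ with $Z(C)$ an abelian normal subgroup of $G$ forces $A=Z(C)$. Thus $A$ coincides with the center of $C$, so it is uniquely determined, which is exactly the assertion of the lemma.

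The step I expect to carry the real weight is this reverse inclusion, and the point worth emphasising is that it is \emph{not} an intrinsic statement about $C$ alone: in general $Z(C)$ need not be a maximal abelian normal subgroup of $C$, since an imprimitive block action can produce a strictly larger abelian normal subgroup of $C$ that fails to be normal in the ambient $G$. What rescues the argument is precisely that $Z(C)$, being characteristic in the normal subgroup $C\lhd G$, is itself normal in $G$, so the \emph{global} maximality of $A$ applies to it. The representation-theoretic machinery built above — absolute irreducibility of the $\rho_i$ and the scalar action $\Img(\rho_i)=\langle -\I\rangle$ of $A$ — enters chiefly through the identification $C=C_G(A)$ and the block-diagonal form of $\rho_{|C}$, which together make the inclusion $A\subseteq Z(C)$ transparent; the decisive input for the opposite inclusion is purely the maximality hypothesis together with the normality of $C$ in $G$.
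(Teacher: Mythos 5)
Your proof is correct, and it ultimately rests on the same two ingredients as the paper's: every element of $C$ centralizes $A$ (immediate once one notes, as you do, that faithfulness of $\rho$ gives $C=C_G(A)$), and the maximality of $A$ among abelian normal subgroups of $G$. The packaging, however, is genuinely different. The paper takes an arbitrary abelian $A'\lhd G$ with $A'\subseteq C$, observes that $AA'$ is an abelian normal subgroup of $G$ because $A'$ centralizes $A$, and concludes $AA'=A$, i.e.\ $A'\subseteq A$. You instead apply the maximality argument once, to the single subgroup $Z(C)$ (which is normal in $G$ since it is characteristic in $C\lhd G$), and obtain the cleaner canonical identification $A=Z(C)$. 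What your version buys is a characteristic description of $A$ in terms of $C$. What it does not give by itself is the absorption statement $A'\subseteq A$ for every abelian $A'\lhd G$ contained in $C$ --- and that is the form of ``uniqueness'' the paper actually invokes later, when it shows that any normal elementary abelian $2$-subgroup of $G$ must land in $C$ and hence inside $A$. A normal abelian $A'\subseteq C$ need not centralize all of $C$, so you cannot conclude $A'\subseteq Z(C)$ directly from $A=Z(C)$; to recover the paper's lemma you need one more run of the same maximality trick (applied to $AA'$, which is abelian precisely because $A'\subseteq C_G(A)$). That is a one-line addition, so I would not call this a gap in your argument so much as a mismatch between the statement you prove and the statement the rest of the paper consumes. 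One further point in your favour: you correctly note that $A\subseteq C$ is automatic because $A$ is abelian, whereas the paper treats this inclusion as something still to be established; the transitive-permutation-group lemma that follows is really needed for an arbitrary normal abelian subgroup $A'$, not for $A$ itself.
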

\begin{proof}
Let $A' \lhd G$ be an abelian group, such that $A' \subset C$. Since all elements of $A$ commute with all elements of $C$, they commute with all elements of $A'$. Hence $AA'$ is a normal abelian subgroup of $G$. Since $A$ is maximal, we have
\[
AA' = A \Rightarrow A' \subset A.
\]
\end{proof}

If we can prove that $A \subset C$, then $A$ is going to be unique in $G$. Recall that we have a~short exact sequence
\[
1 \longrightarrow C \longrightarrow G \stackrel{p}{\longrightarrow} S \longrightarrow 1.
\]
Assuming $A \not\subset C$, we get 
\[
1 \neq p(A) \lhd S.
\]
We prove that it is impossible.

\begin{lm}
Let $S \subset S_k$ be a~transitive permutation group and $k$ be an odd natural number. Then $S$ contains no nontrivial normal elementary abelian 2-groups.
\end{lm}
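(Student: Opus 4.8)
The plan is to argue by contradiction, exploiting the interaction between the orbit structure of a normal subgroup and the transitivity of $Q$, together with the parity constraint coming from $k$ being odd. So suppose, for contradiction, that $N \lhd Q$ is a nontrivial elementary abelian $2$-group, and consider the orbits of $N$ on the set $\{1, \ldots, k\}$.

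The first and central step is to record the standard fact that, because $N$ is normal in the transitive group $Q$, all orbits of $N$ on $\{1, \ldots, k\}$ have the same cardinality. Indeed, given two $N$-orbits $\Delta_1, \Delta_2$ and points $\omega_1 \in \Delta_1$, $\omega_2 \in \Delta_2$, transitivity of $Q$ yields $q \in Q$ with $q(\omega_1) = \omega_2$; normality of $N$ then gives $q(N\omega_1) = (qNq^{-1})(q\omega_1) = N\omega_2$, so $q$ carries $\Delta_1$ bijectively onto $\Delta_2$. I would denote this common orbit size by $d$. Since the $N$-orbits partition the $k$ points, $d \mid k$.

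Next I would invoke the orbit--stabilizer theorem: each orbit size $d$ divides $|N|$, which is a power of $2$ because $N$ is an elementary abelian $2$-group. Hence $d$ is itself a power of $2$.

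Finally I would combine the two divisibility facts: $d \mid k$ with $k$ odd forces $d = 1$, so every orbit of $N$ is a singleton and $N$ fixes each of the $k$ points. As $N \subset S_k$ acts faithfully, this forces $N = 1$, contradicting the assumption that $N$ is nontrivial. The only point requiring genuine care is the equal-orbit-size lemma for normal subgroups of a transitive group; once that is in hand, the remainder is a short parity argument, so I do not expect a serious obstacle.
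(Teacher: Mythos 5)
Your proof is correct, and it takes a genuinely different and noticeably cleaner route than the paper. The paper argues directly with the cycle structure of involutions: it picks a nontrivial element of the normal $2$-group, looks at the supports $N_i$ of its $Q$-conjugates, extracts a minimal subfamily whose union has odd cardinality, and uses a parity count to produce two conjugate involutions containing transpositions $(a,b)$ and $(b,c)$ sharing exactly one point, whose product then has order greater than $2$ --- contradicting that the normal subgroup is elementary abelian. You instead use the standard structural fact that the orbits of a normal subgroup $N$ of a transitive group all have a common length $d$, so that $d \mid k$, while orbit--stabilizer gives $d \mid |N| = 2^m$; since $k$ is odd this forces $d=1$ and hence $N=1$ by faithfulness. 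Your argument is shorter, avoids the delicate combinatorics of supports, and in fact proves a stronger statement: it rules out any nontrivial normal $2$-subgroup (indeed any normal $p$-subgroup with $p \nmid k$), not just elementary abelian ones, whereas the paper's transposition argument genuinely uses that every nonidentity element of $H$ is an involution. The only ingredient you must justify is the equal-orbit-length lemma, which you do correctly via $q(N\omega_1) = (qNq^{-1})(q\omega_1)$; everything else is immediate.
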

\begin{proof}
Let $x \in X = \{1,\ldots,k\}$. Let $S_x$ be the stabilizer of $x$ in $S$ and $Sx$ be the orbit of $x$. By the transitivity of the action of $S$ on $X$, we have that $Sx = X$ and since we have a~bijection
\[
Sx \leftrightarrow \{\tau S_x \st \tau \in S\},
\]
the index $[S : S_x]$ of $S_x$ in $S$ is an odd number. Now let $B$ be any normal $2$-subgroup of $S$. Then $B \subset S_x$ and we get
\[
B = \cap_{\tau \in S} \tau B \tau^{-1} \subset \cap_{\tau \in S} \tau S_x \tau^{-1} = \cap_{\tau \in S} S_{\tau(x)} = 1,
\]
since $S$ acts faithfully % and transitively
on $X$.
\end{proof}
We have just proved
\begin{prop}
The maximal, normal elementary abelian subgroup $A \lhd G$ is unique maximal in $G$ and hence it is a characteristic subgroup.
\end{prop}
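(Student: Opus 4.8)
The plan is to bolt together the lemmas proved above so as to show that $A$ absorbs every normal abelian subgroup of $G$, and then to deduce that it is characteristic.

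First I would record that $k$ is odd: since $n = k\cdot(n/k)$ and $n$ is odd by hypothesis, the divisor $k$ of $n$ is odd as well. Combined with Lemma~\ref{lm:transitive}, which gives that $Q \subset S_k$ is transitive, this places us precisely in the hypotheses of the last lemma, so $Q$ contains no nontrivial normal elementary abelian $2$-subgroup.

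Next, let $B \lhd G$ be an arbitrary normal abelian subgroup. By the first proposition of this section, any normal abelian subgroup of $G$ is an elementary abelian $2$-group, so $B$ is one. Consider the short exact sequence $1 \longrightarrow C \longrightarrow G \stackrel{p}{\longrightarrow} Q \longrightarrow 1$ coming from \eqref{eq:semidirect}. Then $p(B) \lhd Q$, and being a homomorphic image of an elementary abelian $2$-group it is again an elementary abelian $2$-group. By the previous paragraph $p(B) = 1$, i.e.\ $B \subseteq \ker p = C$. Now the lemma stating that $A$ is unique in $C$ applies to $B$ and yields $B \subseteq A$. As $B$ was arbitrary, $A$ contains every normal abelian subgroup of $G$, hence it is the unique maximal one.

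Finally, for the characteristic property I would use that any $\varphi \in \Aut(G)$ sends a maximal normal abelian subgroup to a maximal normal abelian subgroup; thus $\varphi(A)$ is maximal normal abelian, and by the uniqueness just established $\varphi(A) = A$. So $A$ is invariant under every automorphism, i.e.\ characteristic. The substantial work lies entirely in the combinatorial last lemma, which is already proved; the only point inside this proposition that needs care is the passage $p(B) = 1 \Rightarrow B \subseteq C$, which rests on identifying $\ker p$ with $C$ in the decomposition \eqref{eq:semidirect}, after which the uniqueness-in-$C$ lemma closes the argument.
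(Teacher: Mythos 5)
Your proof is correct and follows essentially the same route as the paper, which simply assembles the preceding lemmas (uniqueness of $A$ in $C$, the reduction of $A\not\subset C$ to a nontrivial normal elementary abelian $2$-subgroup of $Q$, and the combinatorial lemma excluding such subgroups for transitive $Q\subset S_k$ with $k$ odd). Your explicit remarks that $k$ is odd because $k\mid n$, and that an arbitrary normal abelian $B\lhd G$ is elementary abelian $2$ by the earlier proposition, are points the paper leaves implicit but are exactly the intended argument.
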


\begin{cor}
\label{cor:subgroup}
\[
N_{\GL(n,\Q)}(\rho(G)) \subset N_{\GL(n,\Q)}(\rho(A)).
\]
\end{cor}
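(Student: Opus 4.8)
The plan is to deduce this inclusion immediately from the fact, established in the preceding Proposition, that $A$ is a characteristic subgroup of $G$. Recall that a subgroup is characteristic precisely when it is invariant under \emph{every} automorphism of the ambient group, so the entire content of the corollary reduces to recognizing that conjugation by an element normalizing $\rho(G)$ induces an automorphism of $G$, to which the characteristic property then applies.

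First I would fix an arbitrary $D \in N_{\GL(n,\Q)}(\rho(G))$, so that $D \rho(G) D^{-1} = \rho(G)$. The inner automorphism $x \mapsto D x D^{-1}$ of $\GL(n,\Q)$ then restricts to an automorphism of the group $\rho(G)$. Since $\rho$ is faithful, $\rho \colon G \to \rho(G)$ is an isomorphism, and transporting the above automorphism back through $\rho$ yields an automorphism $\alpha \in \Aut(G)$, uniquely characterized by
\[
\rho(\alpha(g)) = D \rho(g) D^{-1}, \qquad g \in G.
\]

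Next I would invoke the characteristic property of $A$: since $A$ is characteristic in $G$, we have $\alpha(A) = A$. Pushing this equality forward through $\rho$ gives
\[
D \rho(A) D^{-1} = \rho(\alpha(A)) = \rho(A),
\]
which says exactly that $D$ normalizes $\rho(A)$, i.e. $D \in N_{\GL(n,\Q)}(\rho(A))$. As $D$ was arbitrary, the desired inclusion $N_{\GL(n,\Q)}(\rho(G)) \subset N_{\GL(n,\Q)}(\rho(A))$ follows.

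There is essentially no obstacle in this step beyond the work already completed: the substance of the corollary has been front-loaded into the proof that $A$ is the unique maximal normal elementary abelian subgroup, and hence characteristic. The only point needing a moment's care is the role of faithfulness, which is what guarantees that conjugation by $D$ descends to a genuine automorphism of $G$ — and not merely a permutation of $\rho(G)$ with no meaning at the level of $G$ — so that the characteristic property of $A$ can legitimately be applied. Once that identification $N_{\GL(n,\Q)}(\rho(G))/C_{\GL(n,\Q)}(\rho(G)) \hookrightarrow \Aut(G)$ is in place, the inclusion is immediate.
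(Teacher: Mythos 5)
Your argument is correct and is exactly the reasoning the paper intends: the corollary is stated without proof as an immediate consequence of the preceding proposition that $A$ is characteristic in $G$, and your derivation (conjugation by $D$ induces, via the faithfulness of $\rho$, an automorphism of $G$ that must preserve the characteristic subgroup $A$, hence $D\rho(A)D^{-1}=\rho(A)$) is the standard way to make that implication explicit. No gaps.
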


\subsection{The proof of the Theorem \ref{thma:a}}

Let us first restate the theorem.

\begin{thma}
\label{thma:a}
Let $G$ be a finite group with a non-trivial normal 
abelian subgroup $A$ and let $\rho \colon G\to \GL(n,\Z)$ 
be a faithful $\R$-irreducible representation. 
Suppose $n$ is odd. Then for every $D\in N_{\GL(n,\Z)}(\rho(G)),$ there exists $g\in G$ such that $\rho(g) D$ has eigenvalue $1.$
\end{thma}

\begin{proof}
%Note first, that eigenvalues of matrices and their products does not depend on their conjugacy class. 
%Hence, we can change the basis of $\rho$, with conjugating the group $N_{\GL(n,\Z)}(\rho(G))$ by appropriate invertible rational matrix simultaneously, 
%and prove the theorem with these new forms of $\rho$ and $N = N_{\GL(n,\Z)}(\rho(G)).$ 
Note that, by $\R$-irreducibility of $\rho$, $N = N_{\GL(n,\Z)}(\rho(G))$ is a~finite group (see \cite[pages 587-588]{S}).

Since eigenvalues of matrices % and their products 
do not depend on their conjugacy class, we can assume that $\rho(A)$ is a~group of diagonal matrices. 
Using Corollary \ref{cor:subgroup}, Proposition \ref{prop:normalizer} and the fact that
\[
N \subset N_{\GL(n,\Q)}(\rho(G)),
\]
we get
\[
N \subset C_{\GL(n,\Q)}(\rho(A)) \cdot P.
\]
Recall that
\[
C_{\GL(n,\Q)}(\rho(A)) = \bigoplus_{i=1}^k \GL(n/k,\Q)
\]
and elements of $P$ are ''block permutation matrices'' (see Lemma \ref{lm:centralizer} and \eqref{eq:ptau} respectively).

Let $D \in N,$ then $D$ has the form
\[
D = P_\sigma \diag(c_1,\ldots,c_k),
\]
where $\sigma \in S_k$ and $c_i \in \GL(n/k,\Q)$, for $i=1,\ldots,k$. Recall that $G/C \cong S$, 
where $S \subset S_k$ is a~transitive permutation group (see Lemma \ref{lm:transitive}). Hence there exists $\tau \in S$ such that 
\[
\tau(1) = \sigma^{-1}(1)
\]
and for some $g' \in G$,
\[
\rho(g') = \diag(c_1',\ldots,c_k') P_\tau.
\]
We get
\[
\begin{split}
\rho(g')D & = \diag(c_1',\ldots,c_k') P_\tau P_\sigma \diag(c_1,\ldots,c_k) \\
 & = \diag(c_1',\ldots,c_k') P_{\sigma\tau} \diag(c_1,\ldots,c_k) = \diag(d,T),
\end{split}
\]
where $T$ is the matrix of rows of $\diag(c_2,\ldots,c_k)$ permuted by $\sigma\tau$. 
Since $d=c_1'c_1 \in \GL(n/k,\Q)$ has an odd degree, it must have real eigenvalue 
and since $N$ is of a~finite order, this eigenvalue is $\pm 1$. 
If the eigenvalue is $1$, then we take $g=g'$ and the theorem is proved. Otherwise, by the Clifford's theorem and the faithfulness of $\rho$, we can take $a \in A$ such that $\rho_1(a) = -\I$. Then $\rho_1(a)d$ has an eigenvalue $1$ and hence, taking $g=ag'$, the element
\[
\begin{split}
\rho(g)D & = \rho(ag')D = \rho(a) \rho(g')D = \rho(a) \diag(d,T) = \\ 
         & = (\rho_1\oplus\ldots\oplus\rho_k)(a)\cdot \diag(d,T) = \\ &=\diag(\rho_1(a)d,(\rho_2\oplus\ldots\oplus\rho_k)(a)T)
\end{split}
\]
has an eigenvalue equal to $1$ also. This finishes the proof.
\end{proof}


\begin{thebibliography}{99}
\bibitem{charlap}  L. S. Charlap, \emph{Bieberbach Groups and Flat Manifolds}, Universitext, Springer-Verlag, New York, 1986
\bibitem{CuRe62} Ch.W. Curtis, I. Reiner, \emph{Representation theory of finite groups and associative algebras}, Pure and Applied Mathematics, Vol. XI Interscience Publishers, a division of John Wiley \& Sons, New York-London 1962 

\bibitem{DRP} K. Dekimpe, B. De Rock, P. Penninckx, \emph{The $R_{\infty}$ property for infra-nilmanifolds}, Topol. Methods Nonlinear Anal. 34 (2009), no.2, 353 - 373
\bibitem{felsz} A. Fel'shtyn, \emph{New direction in Nielsen-Reidemeister theory}, Topology and its Appl. 157 (2010), 1724-1735
\bibitem{P} H. L. Porteous, \emph{Anosov diffeomorphisms of flat manifolds}, Topology, 11 (1972), 307 - 315
\bibitem{Re61} I. Reiner, \emph{The Schur index in the theory of group representations}, Michigan Math. J. 
Volume 8, Issue 1 (1961), 39-47
\bibitem{Rob} D. J. S. Robinson, \emph{A Course in the Theory of Groups}, Springer - Verlag, New York 1982
\bibitem{S} A. Szczepa\'nski, \emph{Outer automorphism groups of Bieberbach groups}, Bull. Belgium Math. Soc.
(simon Stevin) 3 (1996), 585 - 593
\bibitem{TW} J. Taback, P. Wang, \emph{Twisted conjugacy and quasi-isometry invariance for generalized solvable Baumslag-Solitar groups.},
J. London Math. Soc. 75 (2007), 705 - 717
\bibitem{wolf} J. Wolf, \emph{Spaces of constant curvature}, MacGraw Hill, New York-London-Sydney, 1967

\end{thebibliography}
\end{document}